\newcommand{\bC}{{\mathbb{C}}}
\newcommand{\bM}{{\mathbb{M}}}
\newcommand{\bR}{{\mathbb{R}}}
  \newcommand{\E}{{\mathcal{E}}}
\renewcommand{\phi}{\varphi}
\newcommand{\upchi}{{\raise.35ex\hbox{\ensuremath{\chi}}}}
\renewcommand{\leq}{\leqslant}
\renewcommand{\geq}{\geqslant}
\newtheorem{thm}{Theorem}[section]
\newtheorem{lemma}[thm]{Lemma}
\newtheorem{rk}[thm]{Remark}
\title{An extension of inequalities by Ando}
\date{}
\author[\'E. Ricard]{\'Eric Ricard}
\address{Normandie Univ, UNICAEN, CNRS, Laboratoire de Math{\'e}matiques Nicolas Oresme, 14000 Caen, France}
\email{eric.ricard@unicaen.fr}
\thanks{{\it 2010 Mathematics Subject Classification:} 46L51; 47A30.} 
\thanks{{\it Key words:} Norm inequalities, functional calculus}
\begin{document}
\begin{abstract}
We give variations on Ando's result comparing $f(B)-f(A)$
and $f(|B-A|)$ with respect to unitarily invariant norms on matrices. 
\end{abstract}

\maketitle

 This note deals with norm matricial inequalities. Our starting point
 is the following inequality by Ando in \cite{A}: if $A, B$ are
 positive matrices and $\|.\|$ is a unitarily invariant norm and $f$
 is an operator monotone function on $\bR^+$ with $f(0)\geq 0$, then
\begin{equation}\label{ando}
\| f(B)-f(A)\| \leq \|f(|B-A|)\|.\end{equation}
 The result was also obtained by
 Birman Koplienko and Solomyak in \cite{BKS}. The inequality reverses
 if the reciprocal of $f$ is operator monotone, this holds for
 instance if $f$ is an increasing operator convex function with
 $f(0)=0$.

 In \cite{R}, it is shown that for any $p\geq 2$, 
$${\rm Tr } (B-A)(B^{p-1}-A^{p-1}) \geq {\rm Tr } |B-A|^{p}.$$ This
 has been recently extended in \cite{DHLV} by Dinh, Ho, Le and Vo to
 any operator convex function $f$ with $f(0)=0$:
$${\rm Tr } (B-A)(f(B)-f(A)) \geq {\rm Tr } |B-A|f(|B-A|).$$ The
 inequality is reversed if $f$ is non-negative operator monotone. It is
 naturally tempting to imagine that for any positive operator monotone
 function one has
$$\|(B-A) (f(B)-f(A))\| \leq \||B-A| f(|B-A|)\|,$$ with reversed
 inequality for positive operator convex functions.  The aim of this
 note is to show that such inequalities hold. To do so, we revisit
 Ando's argument to see how it can be extended.

In the first section, we review basic facts on various comparisons of
matrices before using them to deduce the main inequalities. We assume
that the reader is familiar with matricial inequalities. We have
chosen to stick with matricial inequalities but most of what is done
here can be adapted to general semifinite von Neumann algebras.

\section{Comparisons of matrices}

 We refer \cite{Bha} for basic background on matricial inequalities.
As usual,  $\bM_n$ is the space of matrices of size $n$ over $\bC$ with its 
usual trace ${\rm Tr}$. We denote by $\bM_n^+$ its subset of positive semidefinite matrices.

 Given $A\in \bM_n$, we denote by $s_i(A)$ its singular values in
 decreasing order. We will frequently use that if $A\in \bM_n^+$ and
 $f: \bR^+=[0,\infty)\to \bR^+$ is a positive non-decreasing function then
 $s_i(f(A))=f(s_i(A))$.

We recall classical orders beyond the usual one $\leq$ on selfadjoint matrices.

First for $A,\,B\in \bM_n$, we write $A\preceq B$ if for all $1\leq k\leq n$, 
$\sum_{i=1}^k s_i(A)\leq \sum_{i=1}^k s_i(B)$.

If we set $\|A\|_{(k)}= \sum_{i=1}^k s_i(A)$ for the Ky Fan norms, Ky
Fan's principle (Theorem IV.2.2 in \cite{Bha}) gives that if $A\preceq
B$ iff $\|A\|\leq \|B\|$ for any unitarily invariant norm. This is
also equivalent to the existence of a completely positive map $T:
\bM_n\to \bM_n$ with $T(1)\leq 1$ and ${\rm Tr} \circ T\leq {\rm Tr}$
with $T(|B|)=|A|$ see \cite{H}. Using the polar decomposition, we obtain
that $A\preceq B$ iff there is a map $T:
\bM_n\to \bM_n$, which is contractive for all unitarily invariant norms, so that
$T(B)=A$.
We won't use it but we recall that if
$\phi:\bR^{+}\to \bR^{+}$ is a non-decreasing convex function and
$A,B\in \bM_n^+$ then $\phi(A)\preceq \phi(B)$ if $A\preceq B$.

Finally, for $A\in \bM_n$ and $B\in \bM_N$ with $N\geq n$, we write
$A\ll B$ if for all $1\leq k\leq n$, $ s_k(A)\leq s_k(B)$.  Weyl's
monotonicity principle, Corollary III.2.3 in \cite{Bha}, gives that
for $A,B\in \bM_n^+$ with $A\leq B$, $A\ll B$.  Thus,
using the polar decomposition and diagonalization, it is easy to see
that for $A\in \bM_n$ and $B\in \bM_N,$ $A\ll B$ iff there  are
contractions $C, C'\in \bM_{N,n}$ so that $|A|=C^*|B|C$ and $A=C'^*BC$. Of course
for $A,B\in \bM_n$, $A\ll B$ implies $A\preceq B$.

Now we gather some facts about these comparisons. They must be folklore but we give a proof for completeness. For $A,B\in \bM_n$, we write 
$A\oplus B$ for $\left[\begin{array}{cc} A & 0 \\0& B\end{array}\right]\in \bM_{2n}$.

\begin{lemma}\label{a-b}
If $A,\,B\in \bM_n^+$, then $B-A\ll B\oplus A$.
\end{lemma}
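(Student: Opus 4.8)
The plan is to control the singular values of $B-A$ by those of $B\oplus A$ directly, using the characterization of $\ll$ via contractions that was recorded just before the statement. Concretely, it suffices to exhibit a single contraction $C\in\bM_{2n,n}$ such that $|B-A| = C^*(B\oplus A)C$, or even just $B-A = C'^*(B\oplus A)C$; by Weyl's monotonicity principle and the remark preceding the lemma, producing such a $C$ immediately yields $B-A\ll B\oplus A$. Since $A,B\in\bM_n^+$, a natural candidate is to write $B-A = B^{1/2}B^{1/2} - A^{1/2}A^{1/2}$ and look for $C$ built from the blocks $B^{1/2}$ and $A^{1/2}$.

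First I would observe that $B\oplus A = D^*D$ where $D = \begin{bmatrix} B^{1/2} & 0 \\ 0 & A^{1/2}\end{bmatrix}\in\bM_{2n}$ (taking positive square roots). Then for any $V\in\bM_{2n,n}$ one has $V^*(B\oplus A)V = (DV)^*(DV)$, which is positive. The key algebraic identity I would aim for is
\begin{equation*}
B - A \;=\; \begin{bmatrix} B^{1/2} & -A^{1/2}\end{bmatrix}\begin{bmatrix} B^{1/2} \\ A^{1/2}\end{bmatrix},
\end{equation*}
so that with $C' = \tfrac1{\sqrt2}\begin{bmatrix} B^{1/2} \\ A^{1/2}\end{bmatrix}$ and $C'' = \sqrt2\begin{bmatrix} B^{1/2} & -A^{1/2}\end{bmatrix}^*$ one gets a factorization of $B-A$ through $B\oplus A$; but one must check the relevant operators are contractions, and a priori $B^{1/2}$ need not be a contraction. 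To fix this, I would instead normalize: replace $A,B$ by $A/\lambda, B/\lambda$ with $\lambda = \max(\|A\|,\|B\|)$, which is harmless since both $\ll$ and $\oplus$ scale homogeneously. After this reduction $A,B\le 1$, so $A^{1/2},B^{1/2}$ are contractions, and then $C' = \tfrac1{\sqrt2}\begin{bmatrix} B^{1/2} \\ A^{1/2}\end{bmatrix}$ satisfies $C'^*C' = \tfrac12(B+A)\le 1$, i.e. $C'$ is a contraction; similarly the row operator $\tfrac1{\sqrt2}\begin{bmatrix} B^{1/2} & A^{1/2}\end{bmatrix}$ is a contraction. Writing $J = 1\oplus(-1)\in\bM_{2n}$, which is unitary, we get $B-A = (\sqrt2\, C')^* J\, (D) \cdot (\text{something})$; cleaning this up, the identity becomes $B-A = (2C')^* \,(B\oplus A)^{1/2}\, J\, (B\oplus A)^{1/2}\,$-type expression — but the cleanest route is simply: $B - A = X^*X - Y^*Y$ with $X = B^{1/2}$, $Y = A^{1/2}$, and then sandwich $B\oplus A$ by the contractions $\tfrac1{\sqrt2}\begin{bmatrix} 1 \\ \pm 1\end{bmatrix}$ (scalar blocks times identity), giving $B-A = \begin{bmatrix} 1 & -1\end{bmatrix}\begin{bmatrix} B & 0\\ 0 & A\end{bmatrix}^{1/2}$...

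Let me streamline: after normalization, set $C = \tfrac1{\sqrt2}\begin{bmatrix} 1_n \\ 1_n\end{bmatrix}$ and $C' = \tfrac1{\sqrt2}\begin{bmatrix} 1_n \\ -1_n\end{bmatrix}$, which are contractions in $\bM_{2n,n}$. A direct computation gives $C'^*(B\oplus A)C = \tfrac12(B-A)\cdot 1_n$... no, that's $\tfrac12(B - A)$ only if the off-diagonal vanishes, which it does: $C'^*(B\oplus A)C = \tfrac12(B - A)$. Hmm, that gives $\tfrac12(B-A)$, not $B-A$; but rescaling $C,C'$ by $\sqrt2$ breaks contractivity. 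The honest fix is that $B\oplus A$ should be replaced by $(B\oplus A)^{1/2}$ sandwiched appropriately: $B - A = (B^{1/2}\oplus A^{1/2})$ conjugated — precisely, with $D = B^{1/2}\oplus A^{1/2}$, we have $\begin{bmatrix}1 & -1\end{bmatrix} D^2 \begin{bmatrix}1\\1\end{bmatrix} = B - A$ but also $D^2 = B\oplus A$, and $\begin{bmatrix}1 & -1\end{bmatrix}$ has norm $\sqrt2$. So $B - A = C_1^* (B\oplus A) C_2$ where $C_1 = \begin{bmatrix}1\\-1\end{bmatrix}$, $C_2 = \begin{bmatrix}1\\1\end{bmatrix}$, each of norm $\sqrt2$; factoring the norms symmetrically, $B-A = (\tfrac1{\sqrt2}C_1)^*\,\sqrt2(B\oplus A)\sqrt2^{-1}\cdots$ — the genuinely correct statement is that $B-A = \widetilde C_1^*\,(B\oplus A)\,\widetilde C_2$ with $\widetilde C_i = \tfrac1{\sqrt2}C_i$ contractions and an extra factor of $2$, which is absorbed by rescaling $B,A$; since the inequality to prove is scale-invariant, I lose nothing.

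**The main obstacle.** The real subtlety — the step I would spend the most care on — is passing from the factorization of $B-A$ to the conclusion $|B-A|\ll B\oplus A$, since $\ll$ is defined via singular values and the remark before the lemma gives both implications. The safe argument: $B-A$ is selfadjoint, so its singular values are the absolute values of its eigenvalues; writing $B-A = B - A$ with $-A \le B - A \le B$ in the order $\le$ on selfadjoint matrices (wait — that needs checking: indeed $-A \le B-A$ since $B\ge 0$, and $B - A \le B$ since $A \ge 0$), Weyl's monotonicity principle applied to $B - A \le B$ bounds the positive eigenvalues of $B-A$ by $s_i(B)$, and applied to $A - B \le A$ (i.e. $-(B-A)\le A$) bounds the negative part of $B-A$, i.e. its absolute values there, by $s_i(A)$. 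Interlacing these two lists of at most $n$ numbers into the full list of $n$ singular values of $B-A$, and comparing with the singular values of $B\oplus A$ (which is the decreasing rearrangement of $\{s_i(B)\}\cup\{s_i(A)\}$, a list of length $2n$), one checks $s_k(B-A) \le s_k(B\oplus A)$ for every $k\le n$ because each singular value of $B-A$ is dominated, in a rank-respecting way, by one of the first $2n$ entries on the right. Making this interlacing-counting rigorous — essentially a min–max / Courant–Fischer bookkeeping argument — is the one place where I would be careful, but it is routine: $s_k(B\oplus A) = \max\{\,t : \#\{i: s_i(B)\ge t\} + \#\{i: s_i(A)\ge t\} \ge k\,\}$, and the eigenvalue bounds above show $\#\{i : s_i(B-A)\ge t\} \le \#\{i: s_i(B)\ge t\} + \#\{i : s_i(A)\ge t\}$ for all $t>0$, which is exactly $s_k(B-A)\le s_k(B\oplus A)$.
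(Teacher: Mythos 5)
Your write-up really contains two separate arguments, and they have different fates. The first one --- the attempt to exhibit fixed contractions $C_1,C_2\in\bM_{2n,n}$ with $B-A=C_1^*(B\oplus A)C_2$ --- does not work as written: with $C_1=\tfrac1{\sqrt2}\begin{sbmatrix}1\\-1\end{sbmatrix}$ and $C_2=\tfrac1{\sqrt2}\begin{sbmatrix}1\\1\end{sbmatrix}$ you get $\tfrac12(B-A)$, and the leftover factor $2$ cannot be ``absorbed by rescaling $A,B$''. Replacing $A,B$ by $\lambda A,\lambda B$ multiplies \emph{both} $B-A$ and $B\oplus A$ by $\lambda$, so after rescaling you still only obtain $B-A=C_1^*\big(2(B\oplus A)\big)C_2$, i.e.\ the strictly weaker statement $B-A\ll 2(B\oplus A)$. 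Contractions realizing $B-A\ll B\oplus A$ do exist by the equivalence recalled before the lemma, but only a posteriori, once the lemma is proved; they must depend on $A$ and $B$ (in the paper's proof they are essentially built from the support projections of the positive and negative parts of $B-A$), not be the constant matrices $\tfrac1{\sqrt2}\begin{sbmatrix}1\\\pm1\end{sbmatrix}$.

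Fortunately your second argument, given under ``the main obstacle'', is self-contained (it never uses the factorization) and correct: from $B-A\le B$ and $A-B\le A$, Weyl's monotonicity gives $\lambda_k^{\downarrow}(B-A)\le s_k(B)$ and $\lambda_k^{\downarrow}(A-B)\le s_k(A)$, whence $\#\{i:s_i(B-A)\ge t\}\le \#\{i:s_i(B)\ge t\}+\#\{i:s_i(A)\ge t\}=\#\{i:s_i(B\oplus A)\ge t\}$ for every $t>0$, which is equivalent to $s_k(B-A)\le s_k(B\oplus A)$ for all $k$. This is the same idea as the paper's proof but packaged differently: the paper writes $B-A=D_+-D_-$, notes $s_k(B-A)=s_k(D_+\oplus D_-)$, bounds $D_+\oplus D_-=e(B-A)e\oplus f(A-B)f\le eBe\oplus fAf$ with $e,f$ the support projections, and applies Weyl's principle a single time in $\bM_{2n}$; the direct sum does the merging of the two lists of eigenvalues automatically, so the paper avoids the distribution-function bookkeeping that you (correctly) carry out by hand. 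I would delete the first two thirds of your proposal and keep only the final paragraph, stated cleanly.
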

\begin{proof}
Since $B-A$ is selfadjoint, it can be written as 
$B-A=D_+-D_-$ where $D_{\pm }\in \bM_n^+$ and $D_+D_-=0$. 
It follows 
that for any $1\leq k\leq n$, $s_k(B-A)=s_k(D_+\oplus D_-)$. 
Let $e$ and $f$ be the support projections of $D_+$ and $D_-$, then 
$$0\leq D_+ \oplus D_-=
 e(B-A)e \oplus f(A-B)f\leq  eBe \oplus fAf.$$
The result then follows by Weyl's monotony principle as $e\oplus f$ is a contraction.
\end{proof}

\begin{lemma}\label{sum}
Let $D\in \bM_n^+$ and $g_i:\bR^+\to \bR^+$ be  non-decreasing
functions and $A_i\in \bM_n$ for $1\leq i\leq d$ such that $A_i\preceq g_i(D)$, then  
$\sum_{i=1}^dA_i\preceq (\sum_{i=1}^dg_i)(D)$. 
\end{lemma}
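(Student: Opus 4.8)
The plan is to reduce the statement to the additivity of the Ky Fan norms $\|\cdot\|_{(k)}$ on the special family $\{g_i(D)\}_{i=1}^d$, and then apply the triangle inequality. The point to notice is that although $\|X+Y\|_{(k)}=\|X\|_{(k)}+\|Y\|_{(k)}$ fails for general $X,Y\in\bM_n^+$, it does hold when $X=g(D)$ and $Y=h(D)$ with $g,h$ non-negative and non-decreasing: the top $k$ singular values of each are then attained on the span of the top $k$ eigenvectors of $D$, so everything is ``aligned''.

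Concretely, fix $k$ with $1\leq k\leq n$. The function $g:=\sum_{i=1}^d g_i$ is again non-negative and non-decreasing, so the fact recalled above that $s_j(f(D))=f(s_j(D))$ for $D\in\bM_n^+$ and $f$ non-negative non-decreasing, applied to each $g_i$ and to $g$, gives
$$\Big\|\Big(\sum_{i=1}^d g_i\Big)(D)\Big\|_{(k)}=\sum_{j=1}^k g(s_j(D))=\sum_{j=1}^k\sum_{i=1}^d g_i(s_j(D))=\sum_{i=1}^d\sum_{j=1}^k g_i(s_j(D))=\sum_{i=1}^d\|g_i(D)\|_{(k)}.$$

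Since each $\|\cdot\|_{(k)}$ is a norm, the triangle inequality together with the hypothesis $A_i\preceq g_i(D)$ (that is, $\|A_i\|_{(k)}\leq\|g_i(D)\|_{(k)}$ for every $k$) then yields
$$\Big\|\sum_{i=1}^d A_i\Big\|_{(k)}\leq\sum_{i=1}^d\|A_i\|_{(k)}\leq\sum_{i=1}^d\|g_i(D)\|_{(k)}=\Big\|\Big(\sum_{i=1}^d g_i\Big)(D)\Big\|_{(k)}.$$
As this holds for all $1\leq k\leq n$, by definition $\sum_{i=1}^d A_i\preceq(\sum_{i=1}^d g_i)(D)$.

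There is no real obstacle here; the only thing worth emphasizing is the additivity of $\|\cdot\|_{(k)}$ on the family $\{g_i(D)\}_i$, which is precisely where the non-negativity and monotonicity of the $g_i$ enter — they force the $g_i(D)$ to be simultaneously diagonalized with singular values ordered as those of $D$. One could alternatively argue through the characterization of $\preceq$ by maps contractive for all unitarily invariant norms, but the direct Ky Fan computation above is the shortest path.
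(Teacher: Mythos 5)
Your proof is correct and is essentially identical to the paper's: both reduce the claim to the triangle inequality for the Ky Fan norms $\|\cdot\|_{(k)}$ together with the additivity $\sum_i\|g_i(D)\|_{(k)}=\|(\sum_i g_i)(D)\|_{(k)}$, which follows from $s_j(f(D))=f(s_j(D))$ for non-negative non-decreasing $f$. You simply spell out the computation that the paper leaves as a one-line remark.
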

\begin{proof}
This is just the triangular inequality for the norms $\|.\|_{(k)}$ combined with the fact that $\sum_{i} \|g_i(D)\|_{(k)}=\|\sum g_i(D)\|_{(k)}$. Indeed the $g_i$ 
are non-decreasing so it yields that $s_j(\sum_i g_i(D))=\sum_is_j( g_i(D))$ for any $1\leq j\leq n$. 
\end{proof}
\begin{lemma}\label{sweyl}
Let $D\in \bM_n^+$ and $g_i:\bR^+\to \bR^+$ be non-decreasing
functions and $A_i\in \bM_n$ so that $A_i\preceq g_i(D)$ for $1\leq
i\leq d$, then $\prod_{i=1}^dA_i\preceq (\prod_{i=1}^dg_i)(D)$.
\end{lemma}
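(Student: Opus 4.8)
The plan is to reduce the multiplicative statement (Lemma~\ref{sweyl}) to the additive statement (Lemma~\ref{sum}) by a logarithmic change of variables, after first disposing of the degenerate behaviour at $0$. The key observation is that $\|\cdot\|_{(k)}$-domination ($\preceq$) is, via Ky Fan, equivalent to domination by \emph{every} unitarily invariant norm, and on a product of positive operators singular values factor nicely only when we compare top-$k$ \emph{products}, which is exactly what $\log$ turns into sums.

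First I would reduce to the case $d=2$ by induction; the inductive step needs the elementary fact that if $X \preceq g(D)$ and $Y \preceq h(D)$ with $g,h$ non-decreasing, then $XY \preceq (gh)(D)$, and once we have this for two factors we apply it with $Y = \prod_{i=2}^d A_i$ and $h = \prod_{i=2}^d g_i$, the latter being non-decreasing as a product of non-decreasing non-negative functions. So the heart is: given $A \preceq g(D)$ and $B \preceq h(D)$, show $AB \preceq (gh)(D)$. By the Ky Fan principle it suffices to show $\|AB\|_{(k)} \le \|(gh)(D)\|_{(k)}$ for each $k$. Now $\|(gh)(D)\|_{(k)} = \sum_{j=1}^k g(s_j(D))\,h(s_j(D))$ since $g$, $h$, hence $gh$, are non-decreasing on $\bR^+$ and $D \ge 0$. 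For the left side I would use the standard submultiplicativity of Ky Fan norms for products, namely $s_1(AB)\cdots s_k(AB) \le (s_1(A)\cdots s_k(A))(s_1(B)\cdots s_k(B))$ — more precisely the weak-majorization fact $\sum_{j\le k} s_j(AB) \le \sum_{j \le k} s_j(A) s_j(B)$, which follows from Horn's inequalities (see \cite{Bha}, e.g.\ the discussion around Ky Fan's dominance theorem and Problem III.6.2). Combining, $\|AB\|_{(k)} \le \sum_{j=1}^k s_j(A) s_j(B)$.

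It then remains to see $\sum_{j=1}^k s_j(A) s_j(B) \le \sum_{j=1}^k g(s_j(D)) h(s_j(D))$. This is where I expect the only real subtlety: from $A \preceq g(D)$ we know the \emph{partial sums} of $s_j(A)$ are dominated by those of $g(s_j(D))$, but we need a \emph{termwise} product comparison. The clean way around this is to observe that $A \preceq g(D)$ together with $g$ non-decreasing forces $s_j(A) \le s_j(g(D)) = g(s_j(D))$ is \emph{false} in general — $\preceq$ does not give $\ll$ — so instead I would avoid the termwise route entirely and argue directly with majorization: the sequence $(s_j(A))_j$ is weakly majorized by $(g(s_j(D)))_j$ and $(s_j(B))_j$ by $(h(s_j(D)))_j$, and both majorizing sequences are non-increasing; then apply the fact that if $a \prec_w u$ and $b \prec_w v$ with $u,v$ non-increasing non-negative, the rearranged products satisfy $\sum_{j\le k}(ab)_j^{\downarrow} \le \sum_{j \le k} u_j v_j$. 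This last inequality is a consequence of the two-sided version of Ky Fan / Lidskii-type results; concretely it follows from the scalar inequality of \cite{Bha} (Theorem II.4.4 and its corollaries on products of majorized sequences). The main obstacle is thus pinning down precisely this ``product of weakly majorized sequences'' lemma in the non-increasing-rearrangement form, but it is classical; everything else is bookkeeping and the induction on $d$.

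A cleaner alternative I would try in parallel: realize $\preceq$ via the contractive maps from the discussion preceding Lemma~\ref{a-b}. If $T$ is contractive for all unitarily invariant norms with $T(g(D)) = A$ and $S$ likewise with $S(h(D)) = B$, one would like a single such map sending $(gh)(D)$ to $AB$; composing $T$ and $S$ does not directly do this, but passing through the characterization ``$A \preceq B$ iff $|A| = C^* |B| C$ up to completely positive trace-decreasing corrections'' lets one multiply the witnesses. If this produces a genuine contraction one gets the claim with no majorization combinatorics at all. I expect the first route to be the one that actually goes through, with the majorized-products lemma quoted from \cite{Bha}.
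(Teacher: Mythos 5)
Your proof is correct, but it takes a genuinely different route from the paper's. The paper also reduces to $d=2$, but then writes each $g_i(D)=\sum_j a_{i,j}P_j$ as a positive combination of the spectral projections $P_j=1_{[s_j(D),\infty)}(D)$, realizes $A_i=T_i(g_i(D))$ for maps $T_i$ contractive in every unitarily invariant norm, and expands $A_1A_2$ as a positive combination of products $T_1(P_{j_1})T_2(P_{j_2})$ of contractions; the only estimate needed is $\|XY\|_{(k)}\le\min\{\|X\|_{(k)},\|Y\|_{(k)}\}$ for contractions, giving $T_1(P_{j_1})T_2(P_{j_2})\preceq P_{j_1}P_{j_2}$, after which Lemma \ref{sum} finishes. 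You instead keep the factors whole and invoke two classical singular-value facts: Horn's inequality in the weak-majorization form $\sum_{j\le k}s_j(AB)\le\sum_{j\le k}s_j(A)s_j(B)$, and a product lemma for weakly majorized sequences. Both are true, and you correctly identify the second as the only delicate point (and correctly refuse the false termwise shortcut $s_j(A)\le g(s_j(D))$): it follows from a double Abel summation --- if $\sum_{j\le k}a_j\le\sum_{j\le k}u_j$ for all $k$ and $c$ is non-increasing and non-negative, then $\sum_{j\le k}a_jc_j\le\sum_{j\le k}u_jc_j$; apply this once in each factor, using that $(s_j(B))_j$ and $(g(s_j(D)))_j$ are non-increasing and non-negative. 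Your route leans on heavier off-the-shelf majorization machinery but never needs the contractive-map realization of $\preceq$; the paper's route is more self-contained and, since it only ever manipulates positive linear combinations, transfers directly to the continuous/integral setting of Remark \ref{genpol}. Your closing ``cleaner alternative'' (multiplying the witnesses $T$ and $S$) does not work as stated, but you rightly discard it in favour of the first argument, which is complete.
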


\begin{proof}
  By induction, it suffices to do it for $d=2$.
  We have $A_i=T_i(g_i(D))$ for some  map
  $T_i:\bM_n\to \bM_n$  which is
  contractive for all unitarily invariant norms.
  Let $P_j=1_{[s_j(D),\infty)}(D)$, since $g_i$ is non-decreasing 
there are positive reals $a_{i,j}$ so that $g_i(D)=\sum_{j=1}^n
  a_{i,j}P_j$. Hence $A_1A_2=\sum_{j_1,j_2}
  a_{1,j_1}a_{2,j_2} T_1(P_{j_1})T_2(P_{j_2})$. But $T_i(P_{j_i})$'s are contractions and
  $$\|T_1(P_{j_1})T_2(P_{j_2})\|_{(k)}\leq \min \{
  \|T_1(P_{j_1})\|_{(k)},\|T_2(P_{j_2})\|_{(k)}\} \leq  \min \{
  \|P_{j_1}\|_{(k)},\|P_{j_2}\|_{(k)}\}=\|P_{j_1}P_{j_2}\|_{(k)}.$$
  This means that $T_1(P_{j_1})T_2(P_{j_2})\preceq P_{j_1}P_{j_2}$. 
  Noticing that $P_{j_1}P_{j_2}$  is a non-decreasing function of $D$
and $(g_1g_2)(D)=\sum_{j_1,...,j_d} a_{1,j_1}a_{2,j_2} P_{j_1}P_{j_2}$, we get the conclusion by Lemma \ref{sum}.
\end{proof}

Given a non-commutative polynomial in several variables 
$$P(X_1,...,X_d)=\sum_{l=0}^m \sum_{i_1,...,i_l=1}^d
\alpha_{i_1,...,i_l} X_{i_1}...X_{i_l},$$ 
we define $|P|$ as 
$$|P|(X_1,...,X_d)=\sum_{l=0}^m \sum_{i_1,...,i_l=1}^d
|\alpha_{i_1,...,i_l}| X_{i_1}...X_{i_l}.$$

\begin{lemma}\label{poly}
Let $D\in \bM_n^+$ and $g_i:\bR^+\to \bR^+$ be   non-decreasing
functions and $A_i\in \bM_n$ such that $A_i\preceq g_i(D)$ for $1\leq i\leq d$, then 
for any non-commutative polynomial $P$ of $d$-variables, we have 
$$P(A_1,...,A_d)\preceq |P|(g_1(D),...,g_d(D)).$$
\end{lemma}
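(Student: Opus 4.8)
The plan is to reduce the general polynomial statement to the two building blocks already established: Lemma \ref{sum} (subadditivity of $\preceq$ against sums of non-decreasing functions of a fixed $D$) and Lemma \ref{sweyl} (submultiplicativity against products). The point is that $P(A_1,\dots,A_d)$ is, by definition, a finite sum of scalar multiples of monomials $A_{i_1}\cdots A_{i_l}$, and $|P|(g_1(D),\dots,g_d(D))$ is the same sum with coefficients replaced by their absolute values and each $A_i$ replaced by $g_i(D)$.

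First I would fix a monomial $A_{i_1}\cdots A_{i_l}$ appearing in $P$ with coefficient $\alpha_{i_1,\dots,i_l}$. Since each factor satisfies $A_{i_j}\preceq g_{i_j}(D)$, Lemma \ref{sweyl} applied to the $l$ factors $A_{i_1},\dots,A_{i_l}$ (with the associated non-decreasing functions $g_{i_1},\dots,g_{i_l}$) gives
\[
A_{i_1}\cdots A_{i_l}\preceq (g_{i_1}\cdots g_{i_l})(D),
\]
and multiplying by the scalar $|\alpha_{i_1,\dots,i_l}|\geq 0$ preserves $\preceq$, so $\alpha_{i_1,\dots,i_l}A_{i_1}\cdots A_{i_l}\preceq |\alpha_{i_1,\dots,i_l}|(g_{i_1}\cdots g_{i_l})(D)$. (One should note the degenerate case $l=0$: the constant term $\alpha_\mt\cdot 1$ satisfies $\alpha_\mt 1\preceq |\alpha_\mt| 1$ trivially, and the constant function $1$ is non-decreasing, so it fits the same pattern.)

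Next I would assemble these monomial estimates. Each right-hand side $|\alpha_{i_1,\dots,i_l}|(g_{i_1}\cdots g_{i_l})(D)$ is of the form $h(D)$ for a non-decreasing function $h:\bR^+\to\bR^+$ (a nonnegative scalar times a product of non-decreasing nonnegative functions). Summing over all monomials of $P$ and invoking Lemma \ref{sum} with these finitely many non-decreasing functions yields
\[
P(A_1,\dots,A_d)=\sum \alpha_{i_1,\dots,i_l}A_{i_1}\cdots A_{i_l}\preceq \Big(\sum |\alpha_{i_1,\dots,i_l}|\, g_{i_1}\cdots g_{i_l}\Big)(D)=|P|(g_1(D),\dots,g_d(D)),
\]
where the last equality is exactly the definition of $|P|$ evaluated at $g_1(D),\dots,g_d(D)$, using that these commute (all being functions of $D$) so the product of functions matches the product of operators. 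I do not expect a genuine obstacle here; the only care needed is bookkeeping — making sure the ``functions of $D$'' perspective is applied consistently so that both Lemma \ref{sum} and Lemma \ref{sweyl} apply verbatim, and handling the constant term and the empty-product conventions cleanly.
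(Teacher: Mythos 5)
Your proposal is correct and is exactly the argument the paper intends: the paper's proof is the one-line remark that the lemma is ``just the combination of Lemmas \ref{sum} and \ref{sweyl},'' and you have simply written out that combination (Lemma \ref{sweyl} monomial by monomial, then Lemma \ref{sum} over the monomials), including the harmless bookkeeping for scalars and the constant term.
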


\begin{proof}
This is just the combination of Lemmas \ref{sum} and \ref{sweyl}. 
\end{proof}

\begin{rk}\label{genpol}
One can extend the above lemmas in many ways. For instance, we can
assume that we have a continuous sets of variables $(A_i(s))$ and
replace sums $\sum_{i_1,...,i_l=1}^d$ by integrals against positive measures as long as the objects make sense.
\end{rk}

\section{Main inequalities}

First we rewrite Ando's proof from \cite{A} (see also Section X in
\cite{Bha}). We fix $s>0$ and consider the function on $\bR^+$,
$f_s(t)=\frac {t}{s+t}=1-\frac s{s+t}$.  For convenience, we set
$f_0(t)=t$. These are the basic bricks for operator monotone
functions.

\begin{lemma}\label{withD}
Let $A,\,D\in \bM_n^+$, then $f_s(A+D)-f_s(A)\ll  f_s(D)$ for any $s\geq0$.
\end{lemma}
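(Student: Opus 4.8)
The plan is to compute the difference $f_s(A+D)-f_s(A)$ explicitly via the resolvent expression $f_s(t)=1-s(s+t)^{-1}$, and then to display the resulting positive matrix as a contractive ``compression'' of a matrix whose singular values are exactly those of $f_s(D)$.

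First I would dispose of $s=0$, where $f_0(A+D)-f_0(A)=D=f_0(D)$ and there is nothing to prove. For $s>0$ the matrices $s+A$ and $s+A+D$ are invertible, and the resolvent identity gives
$$X:=f_s(A+D)-f_s(A)=s\bigl[(s+A)^{-1}-(s+A+D)^{-1}\bigr],$$
which is positive since $s+A\le s+A+D$. Setting $C=s^{1/2}(s+A)^{-1/2}$ and $L=(s+A+D)^{-1/2}(s+A)^{1/2}$, both of which are contractions (because $s\le s+A$ and $(s+A+D)^{-1}\le(s+A)^{-1}$ respectively), a short computation rewrites this as $X=C\,(1-L^{*}L)\,C$. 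Since $C$ is a contraction and $1-L^{*}L\ge0$, the elementary bounds $s_k(YZ)\le\|Y\|\,s_k(Z)$ and $s_k(YZ)\le s_k(Y)\,\|Z\|$ immediately give $s_k(X)\le s_k(1-L^{*}L)$ for every $k$.

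The last step is to identify the eigenvalues of $1-L^{*}L$. As $L$ is a square matrix, $1-L^{*}L$ and $1-LL^{*}$ have the same eigenvalues, and
$$1-LL^{*}=(s+A+D)^{-1/2}\bigl[(s+A+D)-(s+A)\bigr](s+A+D)^{-1/2}=(s+A+D)^{-1/2}D(s+A+D)^{-1/2},$$
which in turn has the same eigenvalues as $D^{1/2}(s+A+D)^{-1}D^{1/2}$. Now $(s+A+D)^{-1}\le(s+D)^{-1}$ yields the genuine operator inequality $D^{1/2}(s+A+D)^{-1}D^{1/2}\le D^{1/2}(s+D)^{-1}D^{1/2}=f_s(D)$, so Weyl's monotonicity principle (Corollary III.2.3 in \cite{Bha}) gives $s_k\bigl(D^{1/2}(s+A+D)^{-1}D^{1/2}\bigr)\le s_k(f_s(D))$. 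Chaining all of these produces $s_k(X)\le s_k(f_s(D))$ for all $k$, that is, $X\ll f_s(D)$.

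I expect the only real subtlety to be a false start: the naive hope that $f_s(A+D)-f_s(A)\le f_s(D)$ holds as an operator inequality fails in general, since operator subadditivity fails for operator concave functions, so one must not aim for it. The device that makes the argument work is the passage from $1-L^{*}L$ to $1-LL^{*}$, which trades the awkward resolvent $(s+A)^{-1}$ for $(s+A+D)^{-1}$; only after this swap does a legitimate operator comparison with $f_s(D)$ appear. The remaining work — bookkeeping with square roots of non-commuting resolvents and the equality of the eigenvalues of $YZ$ and $ZY$ for square matrices — is routine.
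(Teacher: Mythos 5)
Your proof is correct and follows essentially the same route as the paper: the resolvent identity and the compression by the contraction $C=s^{1/2}(s+A)^{-1/2}$ are exactly Ando's computation, and your middle factor $1-L^{*}L=1-(s+A)^{1/2}(s+A+D)^{-1}(s+A)^{1/2}$ is precisely the paper's $f_s(CDC)$. The only (harmless) difference is the final comparison: the paper notes $CDC\sim D^{1/2}C^2D^{1/2}\leq D$ and applies operator monotonicity of $f_s$, whereas you pass from $1-L^{*}L$ to $1-LL^{*}=(s+A+D)^{-1/2}D(s+A+D)^{-1/2}$ and use the resolvent inequality $(s+A+D)^{-1}\leq(s+D)^{-1}$ directly; both end at $\ll f_s(D)$ via Weyl's monotonicity principle.
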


\begin{proof}
This is obvious if $s=0$, we assume $s>0$. 

First we have the identity, $f_s(A+D)-f_s(A)= s \big((A+s)^{-1}- (A+D+s)^{-1}\big)$. With $C=s^{1/2}(s+A)^{-1/2}$, which is a contraction, we have 
$f_s(A+D)-f_s(A)=C f_s(CDC)C$.
It follows that 
$f_s(A+D)-f_s(A)\ll f_s(CDC)$. But
$CDC$ is unitarily equivalent to $0\leq D^{1/2}C^2D^{1/2}
\leq D$. As $f_s$ is operator monotone, we end up with 
$f_s(A+D)-f_s(A)\ll f_s(D^{1/2} C D^{1/2})\leq f_s(D)$.
\end{proof}

\begin{lemma}\label{fsll}
Let $A,\,B\in \bM_n^+$, then $f_s(B)-f_s(A)\ll  f_s(|B-A|)$ for all $s\geq0$ .
\end{lemma}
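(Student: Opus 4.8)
The plan is to reuse Ando's splitting device and then feed everything into Lemma~\ref{withD}. Since $B-A$ is selfadjoint, write $B-A=D_+-D_-$ with $D_\pm\in\bM_n^+$ and $D_+D_-=0$, so that $|B-A|=D_++D_-$, and set $C=A+D_+=B+D_-$. As $f_s$ is operator monotone and $C\ge A$, $C\ge B$, the matrices $X:=f_s(C)-f_s(A)$ and $Y:=f_s(C)-f_s(B)$ are positive and
\[
f_s(B)-f_s(A)=X-Y .
\]
Lemma~\ref{withD}, applied once to $C=A+D_+$ and once to $C=B+D_-$, gives $X\ll f_s(D_+)$ and $Y\ll f_s(D_-)$.

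Next I would combine the two bounds. Lemma~\ref{a-b} applied to the positive matrices $X,Y$ yields $X-Y\ll X\oplus Y$. For positive matrices $P\ll Q$ means exactly that $P=C'^*QC'$ for some contraction $C'$ (the description recalled before Lemma~\ref{a-b}), and forming direct sums of such contractions shows that $P_1\ll Q_1$ and $P_2\ll Q_2$ imply $P_1\oplus P_2\ll Q_1\oplus Q_2$. Hence $X\oplus Y\ll f_s(D_+)\oplus f_s(D_-)=f_s(D_+\oplus D_-)$, where the last identity is functional calculus applied blockwise; therefore $f_s(B)-f_s(A)\ll f_s(D_+\oplus D_-)$.

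To conclude, note as in the proof of Lemma~\ref{a-b} that $D_+D_-=0$ forces $s_k(D_+\oplus D_-)=s_k(B-A)=s_k(|B-A|)$ for $1\le k\le n$; since $f_s$ is non-decreasing and positive this gives $s_k(f_s(D_+\oplus D_-))=s_k(f_s(|B-A|))$ for $1\le k\le n$, so $f_s(B)-f_s(A)\ll f_s(|B-A|)$, as claimed. Apart from the bookkeeping, the only real ingredients are Lemma~\ref{withD} and Ando's choice of the auxiliary matrix $C$; the one place I would take care is the compatibility of $\ll$ with direct sums, which is why I would run that step through the contraction characterization of $\ll$ rather than manipulating singular values by hand.
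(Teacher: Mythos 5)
Your proof is correct and follows essentially the same route as the paper: the same splitting $B-A=D_+-D_-$ with the auxiliary matrix $A+D_+=B+D_-$, Lemma~\ref{withD} applied to each half, Lemma~\ref{a-b} to pass to the direct sum, and the disjointness of the supports of $D_\pm$ to identify the singular values of $f_s(D_+\oplus D_-)$ with those of $f_s(|B-A|)$. The only cosmetic difference is that you invoke Lemma~\ref{withD} before Lemma~\ref{a-b} and justify the compatibility of $\ll$ with direct sums via the contraction characterization, which the paper leaves implicit.
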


\begin{proof}
Put $D=B-A$ and define $D_\pm$ as above. Then $A+D_+=B+D_-$ and by
operator monotony of $f_s$ and Lemma \ref{a-b} since $f_s(B)-f_s(A)= f_s(B)-f_s(B+D_-)+f_s(A+D_+)-f_s(A)$, 
$$f_s(B)-f_s(A)\ll\big(f_s(B+D_-)- f_s(B)\big)\oplus \big(f_s(A+D_+)-f_s(A)\big).$$
Thanks to Lemma \ref{withD}, we get $f_s(B)-f_s(A)\ll f_s(D_-)\oplus  f_s(D_+).$

But as $D_-$ and $D_+$ commute and have disjoint supports, 
$f_s(D_-)\oplus f_s(D_+)$ and 
$f_s(|D|) \oplus 0$ are unitarily
equivalent in $\bM_{2n}$ and we can conclude.
\end{proof}

 An operator monotone function $g$ with $g(0)=0$ has an integral
 representation $g(t)=\int_{\bR^+}f_s(t) d\mu(s),$ for some
 positive measure $\mu$ (that may charge $0$) such that $\int_{\bR^+}\frac1{1+s} d\mu(s)<\infty$. Thus Ando's result,
 $g(B)-g(A)\preceq |B-A|$ if $A,B\in \bM_n^+$ follows from Lemma
 \ref{fsll}, as $f_s(B)-f_s(A)\ll f_s(|B-A|)$ and the extension of
 Lemma \ref{sum} to integrals.

By Lemma \ref{poly}, we directly get

\begin{thm}
Let $d,e\geq 1$ and $g_i:\bR^+\to \bR^+$ be operator monotone
functions with $g_i(0)=0$ for $1\leq i\leq d$ and $h_i:\bR^+\to \bR^+$ be
non-decreasing. Then for $P$ a non-commutative polynomial of $d+e$ variables and
any $A,\,B\in \bM_n^+$ and any matrices $C_i$ so that $C_i\preceq h_i(|A-B|)$
for $1\leq i\leq e$, we have
\begin{eqnarray*}P\big(g_1(B)-g_1(A),...,g_d(A)-g_d(B),C_1,...,C_e\big)\hspace{3cm}\\\hspace{1cm} \preceq
|P|\big(g_1(|B-A|),...,g_d(|A-B|), h_1(|A-B|),..., h_e(|A-B|)\big).
\end{eqnarray*}
 
\end{thm}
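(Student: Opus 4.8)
The plan is to deduce the statement directly from Lemma~\ref{poly}, once we know that each of the $d+e$ matrices fed into $P$ is dominated, in the order $\preceq$, by the value at one fixed positive matrix of a non-decreasing function. The fixed matrix will be $D=|B-A|=|A-B|$.

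The first task is the $d$ ``difference'' slots. Fix $1\le i\le d$ and use that an operator monotone $g_i$ on $\bR^+$ with $g_i(0)=0$ admits an integral representation $g_i(t)=\int_{\bR^+}f_s(t)\,d\mu_i(s)$ with $\mu_i$ a positive measure satisfying $\int_{\bR^+}(1+s)^{-1}\,d\mu_i(s)<\infty$. By Lemma~\ref{fsll}, $f_s(B)-f_s(A)\ll f_s(|B-A|)$ for every $s\ge 0$, hence $f_s(B)-f_s(A)\preceq f_s(|B-A|)$. Integrating against $\mu_i$ and invoking the integral version of Lemma~\ref{sum} (Remark~\ref{genpol}; the integrals converge because $0\le f_s\le 1$ for $s>0$ and $\mu_i$ has the stated integrability, while a possible atom at $0$ contributes the finite term $\mu_i(\{0\})(B-A)$) yields $g_i(B)-g_i(A)\preceq g_i(|B-A|)$. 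Since $|A-B|=|B-A|$, exchanging $A$ and $B$ gives the same bound for $g_i(A)-g_i(B)$, so whichever of the two differences occupies the $i$-th slot is $\preceq g_i(D)$.

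The remaining $e$ slots require nothing new: $C_i\preceq h_i(|A-B|)=h_i(D)$ is precisely the hypothesis. At this point all $d+e$ inputs of $P$ are of the form (matrix) $\preceq$ (non-decreasing function)$(D)$: the $g_i$ are non-decreasing, being operator monotone functions on $\bR^+$, and the $h_i$ are non-decreasing by assumption. Applying Lemma~\ref{poly} to the matrices $g_1(B)-g_1(A),\dots,g_d(A)-g_d(B),C_1,\dots,C_e$, to the $d+e$ functions $g_1,\dots,g_d,h_1,\dots,h_e$, and to the common matrix $D$ gives exactly $P(\dots)\preceq |P|(g_1(D),\dots,g_d(D),h_1(D),\dots,h_e(D))$, which is the assertion.

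I do not expect a genuine obstacle here: once Lemmas~\ref{fsll}, \ref{sum} and~\ref{poly} and Ando's integral representation are on hand, the proof is bookkeeping with the definition of $|P|$. The one point that deserves a careful word is the passage from the family of relations $f_s(B)-f_s(A)\preceq f_s(|B-A|)$, $s\ge 0$, to the single relation $g_i(B)-g_i(A)\preceq g_i(|B-A|)$: this is an integrated, rather than finite, instance of Lemma~\ref{sum}, legitimate by Remark~\ref{genpol} provided one checks that the relevant vector-valued integrals converge, which the integrability condition on $\mu_i$ guarantees.
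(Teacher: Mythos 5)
Your proof is correct and follows essentially the same route as the paper: the paper likewise derives $g_i(B)-g_i(A)\preceq g_i(|B-A|)$ from Lemma~\ref{fsll}, the integral representation of operator monotone functions vanishing at $0$, and the integral extension of Lemma~\ref{sum}, and then states that the theorem follows directly from Lemma~\ref{poly}. Your write-up merely makes explicit the bookkeeping (including the convergence of the integrals and the trivial handling of the $C_i$ slots) that the paper leaves to the reader.
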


\begin{rk}
One can see directly in the proof that one just need to assume
$g_i(0)\geq 0$. This can also be seen as applying Lemma \ref{poly} one
more time, as $(g_i-g_i(0))(|B-A|)\ll g_i(|B-A|)$.
\end{rk}

The above theorem can be extended to more general objects other than
polynomials and contains many particular cases. We give a few
examples, assuming that $(g_i)_{i\geq 1}$ are operator monotone
functions with $g_i(0)\geq 0$ and $h_i$ are non-decreasing functions with $h(0)\geq 0$. For any unitarily invariant norm and any $d$, we
have:
\begin{eqnarray}
\|\prod_{i=1}^d(g_i(B)-g_i(A)) \|&\leq& \|\,\prod_{i=1}^dg_i(|B-A|)\|, \\
\| (A-B) (g_1(A)-g_1(B))\|&\leq &\|\, |A-B|g_1(|A-B|)\|,\\\label{hoa}
\| \sum_{i=1}^d h_i(|A-B|) (g_i(A)-g_i(B))\|&\leq &\|\, \sum_{i=1}^d h_ig_i(|A-B|)\|,\\\label{hoa2}
\|(A-B)\exp\big(g_1(A)-g_1(B)\big)\|&\leq&\|\, (A-B)\exp\big(g_1(|A-B|)\big)\|.
\end{eqnarray}

Recall that if $f$ is operator convex on $[0,\infty)$ with $f(0)\leq0$
  then $t\mapsto f(t)/t$ is operator monotone on $(0,\infty)$ by \cite{HP}. In
  particular, if $f$ is non-negative and $f(0)=0$, then $f(t)=tg(t)$
  for $t\in \bR^+$ where $g$ is operator monotone. Thus a non-negative
  operator convex function $f$ on $\bR^+$ with $f(0)=0$ has an
  integral representation:
$$f(t)=\beta t+ \gamma t^2+ \int_{\bR^+} tf_s(t) d\mu(s),$$
for some (positive) measure $\mu$ (that does not charge $0$) such that 
$\int_0^\infty\frac1{1+s} d\mu(s)<\infty$ and  some $\beta,\gamma\geq 0$.

From those inequalities, one can also get results for operator convex
functions, we give one example.

\begin{thm}
Let $f$ be a non-negative operator convex function on $\bR^+$ with $f(0)=0$ and
$h$ a non-decreasing function with $h(0)\geq 0$, then for any
$A,\,B\in \bM_n^+$, we have
$$ hf(|B-A|) \preceq h(|B-A|)\big(f(B)-f(A)\big).$$
\end{thm}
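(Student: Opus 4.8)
The plan is to reduce to the operator monotone setting through the functional inverse of $f$, exactly as one deduces the reversed form of Ando's inequality from \eqref{ando}, and then to drag the weight $h$ through the argument.

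I would first record the structure of $f$. As $f$ is non-negative, operator convex and $f(0)=0$, the function $g(t)=f(t)/t$ is operator monotone on $(0,\infty)$ with $g(0^+)\ge 0$ by \cite{HP}; thus $f$ is non-decreasing and, up to restricting to a large interval (or a limiting argument), we may take $f$ to be a homeomorphism of $\bR^+$, so that $F:=f^{-1}$ is operator monotone with $F(0)=0$. I would also use freely the fact recalled in Section~1 that a non-decreasing convex function $\bR^+\to\bR^+$ preserves $\preceq$ on $\bM_n^+$.

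The tool is \eqref{hoa} with $d=1$: for any operator monotone $\gamma$ with $\gamma(0)\ge 0$, any non-decreasing $k$ with $k(0)\ge 0$, and any $X,Y\in\bM_n^+$,
$$\|k(|X-Y|)\big(\gamma(X)-\gamma(Y)\big)\|\le\|(k\gamma)(|X-Y|)\|$$
for every unitarily invariant norm. I would apply this with $\gamma=F$ and $X=f(B)$, $Y=f(A)$; setting $E:=f(B)-f(A)$ and using $F(f(B))=B$, $F(f(A))=A$, it becomes $\|k(|E|)(B-A)\|\le\|(kF)(|E|)\|$ for every $k$. With $k\equiv1$ one gets $|B-A|\preceq F(|E|)$, and applying $f$ (which preserves $\preceq$) gives $f(|B-A|)\preceq f(F(|E|))=|E|$, that is $f(|B-A|)\preceq f(B)-f(A)$ — the case $h\equiv1$ of the theorem.

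For a general non-decreasing $h$ I would work with the Ky Fan norms. Writing $s_i=s_i(B-A)$, one has $\|(hf)(|B-A|)\|_{(m)}=\sum_{i\le m}h(s_i)f(s_i)$; since $h$ is non-decreasing the coefficients $h(s_i)$ are non-increasing in $i$, so an Abel summation against the submajorization $f(|B-A|)\preceq f(B)-f(A)$ yields $\sum_{i\le m}h(s_i)f(s_i)\le\sum_{i\le m}h(s_i)\,s_i\big(f(B)-f(A)\big)$. The remaining step is to dominate this by $\|h(|B-A|)(f(B)-f(A))\|_{(m)}$, and here is the main obstacle: one cannot simply multiply $f(|B-A|)\preceq f(B)-f(A)$ on the left by $h(|B-A|)$, since left multiplication does not respect $\preceq$ (nor is the entrywise comparison $\ll$ available for general operator convex $f$). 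To get around this I would re-run the displayed inequality with the tailored weight $k=(hg)\circ F$, for which $(kF)(t)=t\,h(F(t))=(hf)(F(t))$, and then combine this estimate with the convexity of $f$ (in the form $f(\lambda t)\le\lambda f(t)$ for $0\le\lambda\le1$) and with the submajorization $|B-A|\preceq F(|E|)$ to produce exactly the missing domination. Carrying out this transfer of the weight from the $|E|$–side back to the $|B-A|$–side, with only weak submajorization at hand, is the delicate point; everything else is routine bookkeeping with the comparisons of Section~1.
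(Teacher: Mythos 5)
Your reduction of the case $h\equiv 1$ is correct and is a legitimately different route from the paper: you invert $f$ (using that $F=f^{-1}$ is operator monotone with $F(0)=0$), apply the Ando-type bound to $X=f(B)$, $Y=f(A)$ to get $|B-A|\preceq F(|f(B)-f(A)|)$, and then push $f$ through $\preceq$. That cleanly recovers $f(|B-A|)\preceq f(B)-f(A)$. But this is the easy half; the content of the theorem is the weight $h$, and there your argument has a genuine gap that the proposed fixes do not close. Concretely, your Abel summation produces the intermediate quantity $\sum_{i\leq m}h(s_i)\,s_i(E)$ with $E=f(B)-f(A)$ and $s_i=s_i(B-A)$, i.e.\ the sum $\sum_{i\leq m}s_i\big(h(|B-A|)\big)s_i(E)$ pairing largest with largest. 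By Horn's inequality $\|XY\|_{(m)}\leq\sum_{i\leq m}s_i(X)s_i(Y)$, this quantity is an \emph{upper} bound for $\|h(|B-A|)E\|_{(m)}$ as well as for $\|(hf)(|B-A|)\|_{(m)}$, so it sits above both sides of the desired inequality and cannot mediate between them; the needed step $\sum_{i\leq m}h(s_i)s_i(E)\leq\|h(|B-A|)E\|_{(m)}$ is false in general (already for commuting diagonal matrices with reversed orderings). The re-run with the tailored weight $k=(hg)\circ F$ only yields another upper bound, on $\|k(|E|)(B-A)\|$, with the weight evaluated at $|E|$ and attached to $B-A$ — structurally the wrong object — and neither convexity of $f$ nor $|B-A|\preceq F(|E|)$ supplies the eigenvector alignment between $h(|B-A|)$ and $f(B)-f(A)$ that the statement requires.

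The paper obtains that alignment by an entirely different device: write $f(t)=\gamma t^2+\delta t-g(t)$ with $g(t)=\int_0^M sf_s(t)\,d\mu(s)$ operator monotone (using $tf_s(t)=t-sf_s(t)$), and apply the trace-preserving conditional expectation $\E$ onto the commutative algebra generated by $D=B-A$. A positivity argument with the support projections of $D_\pm$ gives $|\E(\gamma(B^2-A^2)+\delta D)|\geq\gamma D^2+\delta|D|$ \emph{inside that commutative algebra}, where $h(|D|)$ also lives, so singular values multiply correctly; the subtracted term $h(|D|)(g(B)-g(A))$ is then controlled from above by inequality \eqref{hoa}, and the two estimates are combined by the triangle inequality for the Ky Fan norms. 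If you want to salvage your approach, you would need a substitute for this conditional-expectation step; the submajorization $f(|B-A|)\preceq f(B)-f(A)$ alone is too weak to carry the weight $h$.
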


\begin{proof}
We first prove it in the case where $f(t)=\beta t+ \gamma t^2+ \int_0^M tf_s(t)
d\mu(s)$ for some $M\in \bR^+$, by assumption $\beta, \gamma\geq 0$. Note that $tf_s(t)= t-sf_s(t)$ for
$s>0$. It follows that $f(t)=\gamma t^2+ \delta t - \int_0^M sf_s(t)
d\mu(s)$ for some $\delta\geq 0$. The function $g(t)=\int_0^M sf_s(t)
d\mu(s)$ is operator monotone on $\bR^+$ with $g(0)=0$.
By the triangular inequality for any $n\geq k\geq 1$, with $D=B-A$:
$$\|h(|A-B|) \big(f(B)-f(A)\big)\|_{(k)} \geq \| h(|D|) \big(\gamma
(B^2-A^2)+\delta D\big)\|_{(k)}- \| h(|D|)
\big(g(B)-g(A)\big)\|_{(k)}. $$ Let $\E$ be the trace preserving
conditional expectation onto the (commutative) algebra generated by
$D=D_+-D_-$, we have $\E((A+D)^2-A^2)=2\E(A)D+D^2$. If we denote by $p$
and $q$ be the support projections of $D_+$ and $D_-$. As $A\geq 0$,
$p\E (A)\geq0$ and since $A+D_+=B+D_-$, we get $q\E(A)\geq D_-$. Thus,
$p(2\E(A)D+D^2)\geq D_+^2$ and $q(2\E(A)D+D^2)\leq -D_-^2$. Hence we
arrive at $|\E\big(\gamma ((A+D)^2-A^2)+\delta D\big)|\geq \gamma D^2
+ \delta |D|$, from which for any $1\leq k\leq n$
$$\| h(|D|) \big(\gamma
((A+D)^2-A^2)+\delta D\big)\|_{(k)}\geq \| h(|D|)\big(\gamma D^2 + \delta |D|\big)\|_{(k)}.$$
Using inequality \eqref{hoa},
$$\|h(|D|) \big(f(B)-f(A)\big)\|_{(k)} \geq\| h(|D|)\big(\gamma D^2 + \delta |D|\big)\|_{(k)}- \| h(|D|) g(|D|)\|_{(k)}.$$
As for any $1\leq i\leq k$, we have $s_i\big(h(|D|)\big(\gamma D^2 + \delta |D|\big)\big)-s_i(h(|D|) g(|D|))=s_i(h(D))s_i(f(|D|))$, we get
$$\|h(|B-A|) \big(f(B)-f(A)\big)\|_{(k)} \geq \| h(|B-A|) f(|B-A|)\|_{(k)}.$$
The case of general $f$ follows by approximation.
\end{proof}

One can also adapt the arguments to get trace inequalities as in \cite{Hoa2}.
One gets for instance from \eqref{hoa} that if $h:\bR\to \bR$ is an odd or even function non-decreasing on $\bR^+$ with $h(0)=0$ and $g:\bR^+\to \bR^+$ is operator monotone, then for all $A, B\in \bM_n^+$:
$$\Big| {\rm Tr}\,  h(B-A) (g(B)-g(A)) \Big|  \leq    {\rm Tr} \,hg(|B-A|).$$
 The above arguments also give that if  $h:\bR\to \bR$ is an odd function non-decreasing on $\bR^+$ and $f:\bR^+\to \bR^+$ is non-negative operator convex function with $f(0)=0$, then for all $A, B\in \bM_n^+$:
$$ {\rm Tr}\,  h(B-A) (f(B)-f(A))   \geq    {\rm Tr} \,hf(|B-A|).$$

We would like to remark that all of the above inequalities can be
generalized to bounded operators with finite support 
on a semifinite von Neumann algebra 
(that one can assume to be a factor). One
has to use the generalized $s$-numbers of \cite{FK} instead of the
singular values and symmetric function spaces instead of unitarily
invariant norms see \cite{PX}. We leave other possible technical
extensions to the interested readers.

We conclude by noticing that \eqref{ando} does not hold for general
concave functions. For instance, it is false for $f(t)=\min\{t,1\}$
and the operator or the trace norms. Indeed, \eqref{ando} for the
operator norm would imply that $f$ is Lipschitz in that norm.
 By homogeneity and translation, this would imply that the
absolute value is also Lipschitz in the operator norm on selfadjoint operators which is
false.
\smallskip

{\noindent \bf Acknowledgments} The author wishes to thanks Trung Hoa Dinh for exchanges that motivated this note.

\bibliographystyle{plain}
\bibliography{bibli}

\end{document}